\def\subsubsection{\@startsection{subsubsection}{3}%
  \z@{.5\linespacing\@plus.7\linespacing}{.1\linespacing}%
  {\normalfont\itshape}}
\newtheorem{theorem}{Theorem}
\theoremstyle{definition}
\newtheorem{proposition}{Proposition}
\newtheorem{problem}{Problem}
\theoremstyle{remark}
\title{Minimal curvature-constrained networks}
\author{D. Kirszenblat}
\author{K. Sirinanda}
\author{M. Brazil}
\author{P. Grossman}
\author{J. H. Rubinstein}
\author{D. Thomas}
\begin{document}

\begin{abstract}
This paper introduces an exact algorithm for the construction of a shortest curvature-constrained network interconnecting a given set of directed points in the plane and an iterative method for doing so in 3D space. Such a network will be referred to as a  minimum Dubins network, since its edges are Dubins paths (or slight variants thereof).  The problem of constructing a minimum Dubins network appears in the context of underground mining optimisation, where the aim is to construct a least-cost network of tunnels navigable by trucks with a minimum turning radius. The Dubins network problem is similar to the Steiner tree problem, except that the terminals are directed and there is a curvature constraint. We propose the minimum curvature-constrained Steiner point algorithm for determining the optimal location of the Steiner point in a 3-terminal network. We show that when two terminals are fixed and the third varied, the Steiner point traces out a lima\c{c}on.
\end{abstract}

\subjclass[2010]{Primary 90C35; Secondary 49Q10}

\keywords{Network optimisation, Optimal mine design, Dubins path, Curvature constraint, Steiner point}
\maketitle

\section{Introduction}\label{intro}
This paper is concerned with the problem of designing a shortest path network for vehicles. The paths in the network are subject to a curvature constraint that accounts for the minimum turning radius of the vehicles. Interest in such networks is motivated by their relevance for designing the access in underground mines \cite{Brazil_b}.

The Dubins network problem can be viewed as a novel combination of two problems that have been well-studied in the optimisation literature: the Steiner problem and the Dubins problem. What follows is a review of these two problems. Some basic geometric and topological concepts primarily drawn from \cite{Dubins} and \cite{Hwang} are presented here in order to prepare the reader for subsequent sections. 

First, consider Fermat's problem, which is a 3-terminal special case of the Steiner problem:

\begin{problem}[Fermat's problem]
Given three points $p_{1}, p_{2}, p_{3}$ in the plane $\mathbb{R}^2$, find the point $s$ which minimises the sum of the distances $||sp_1|| + ||sp_2|| + ||sp_3||$.
\end{problem}

The uniqueness of $s$, called the \emph{Steiner point}, is obtained from the convexity of the Euclidean norm. If one of the angles of $\triangle p_{1}p_{2}p_{3}$ is at least $120^\circ$, then $s$ is located at its vertex. Otherwise, $s$ lies in the interior of $\triangle p_{1}p_{2}p_{3}$, whose sides subtend angles of $120^\circ$ at $s$. Melzak \cite{Melzak} proposed a ruler and compass construction for finding $s$ in the latter case. Let $m$ be the third vertex of the equilateral triangle with $p_{1}$ and $p_{2}$ as its other two vertices, and whose interior lies outside that of $\triangle p_{1}p_{2}p_{3}$. Let $\Gamma$ be the circle through $p_{1}, p_{2}, m$. Then $s$ is the intersection of $\Gamma$ and the \emph{Simpson line} $mp_{3}$ as shown in Fig. \ref{FIG1}.

\begin{figure}	[h!]
	\centering
	\includegraphics[scale=0.9]{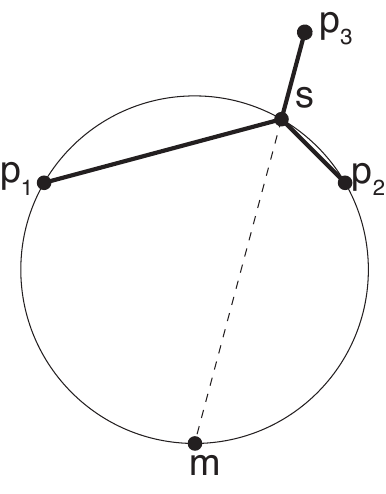}
	\caption{Melzak's construction}
	\label{FIG1}
\end{figure}

Based on the above 3-terminal algorithm that substitutes $m$ for $p_{1}$ and $p_{2}$, locates $s$ and adds the straight segments $p_{1}s$ and $p_{2}s$, the Melzak algorithm generalises to $n$ terminals (see \cite{Melzak} for details). The Melzak algorithm yields all the minimising networks of the Steiner problem:

\begin{problem}[The Steiner problem]
Given $n$ points $p_1, \ldots,p_n$ in the plane $\mathbb{R}^2$, construct a shortest network interconnecting these $n$ points.
\end{problem}

Some basic moves for constructing a shortest network are as follows. Consider a network $S$ interconnecting $n$ points $p_{1}, \ldots, p_n,$ called \emph{terminals}, in the plane $\mathbb{R}^2$. A \emph{Steiner point} is any vertex in $S$ other than a terminal. A Steiner point must be of degree at least three. To see this, a degree-1 Steiner point can be deleted along with its incident edge to shorten the network. Also, a degree-2 Steiner point and its two incident edges can be deleted and replaced by a single edge.

\emph{Splitting} a vertex $v_2$ is the operation of disconnecting two straight segments $v_1v_2, v_2v_3$ and connecting $v_1, v_2, v_3$ to a newly created Steiner point $s$. \emph{Shrinking} an edge $sv_2$ is the reverse operation which returns the original graph. Suppose that two straight segments $v_1v_2, v_2v_3$ meet at a vertex $v_2$ with an angle less than $120^\circ$. Then the network can be shortened by splitting at $v_2$ and solving Fermat's problem for $v_1, v_2, v_3$ in order to locate the new Steiner point $s$. Consequently, a Steiner point must be of degree at most three. It follows that a Steiner point must be of degree exactly three.

Gilbert and Pollak \cite{Gilbert2} called a local minimum network a \emph{Steiner tree}. The network is a local minimum in the sense that it cannot be shortened by small perturbation of the Steiner points, even when splitting is allowed. They called a global minimum network a \emph{Steiner minimal tree}. A simple counting argument shows that a Steiner tree with $n$ terminals has at most $n-2$ Steiner points \cite{Courant}. A Steiner tree with $n-2$ Steiner points is called a \emph{full Steiner tree}.

Now consider the Dubins problem in the plane. It is convenient to introduce some notation. A directed point is a pair $(p, \vv p)$, where $p$ is a point in the plane $\mathbb{R}^2$ and $\vv p$ is a tangent vector in the tangent space $T_p\mathbb{R}^2$ based at $p$. For simplicity, such a pair is denoted as $\mathbf p$. Given two directed points $\mathbf p_1, \mathbf p_2,$ a path connecting these two directed points is called \emph{admissible} \cite{Ayala} if:

\begin{enumerate}[(i)]
	\item It is continuously differentiable.
	\item It starts at $p_1$ and finishes at $p_2$ and has tangent vectors $\vv p_1$ and $\vv p_2$ at these two points respectively.
	\item It is piecewise twice differentiable. There is a finite number of points at which the curvature is not defined.
	\item Where the curvature exists it is less than or equal to a fixed positive number $\rho^{-1}$, where $\rho$ is the minimum radius of curvature.
\end{enumerate}

Without loss of generality, for the theory we will set $\rho = 1$ throughout the paper. The Dubins problem can be formulated as follows:

\begin{problem}[The Dubins problem]
Given an initial directed point $\mathbf p_1$ and a final directed point $\mathbf p_2$, construct a shortest admissible path connecting these two directed points.
\end{problem}

Dubins \cite{Dubins} proved that such a path, called a \emph{Dubins path}, necessarily exists and consists of not more than three components, each of which is either a straight segment, denoted by $S$, or an arc of a circle of radius $\rho$, denoted by $C$. Furthermore, such a path is necessarily a subpath of a path of type $CSC$ or of type $CCC$. A well-known result states that if two points $p_1$ and $p_2$ are distance at least $4\rho$ apart, then a shortest Dubins path between any two directed points $(p_1, \vv p_1)$ and $(p_2, \vv p_2)$ is of type $CSC$ (see, for example, \cite{Goaoc}).

From now on this paper adopts the convention that given two directed points $\mathbf p_1, \mathbf p_2$, a path connecting these two directed points is admissible if the conditions (i), (iii) and (iv) stated above are fulfilled and it has tangent vectors $\vv p_1$ and $-\vv p_2$ at $p_1$ and $p_2$, respectively, when traveling from point $p_1$ to point $p_2$. We do not specify a starting point, because none of the terminals in a network can be considered as initial or final when the number of terminals $n$ is greater than or equal to 3.

Given three directed points $\mathbf p_1, \mathbf p_2, \mathbf p_3$ in the plane, a network interconnecting these three directed points is called \emph{admissible} if:
\begin{enumerate}[(i)]
	\item It is continuously differentiable except at any Steiner point.
	\item It interconnects $p_1, p_2, p_3$ and has tangent vectors $\vv p_1, \vv p_2, \vv p_3$ 		at these three points respectively.
	\item It is piecewise twice differentiable. There is a finite number of points at which the curvature is not defined.
	\item Where the curvature exists it is less than or equal to a fixed positive number $\rho^{-1}$,
	where $\rho$ is the minimum radius of curvature.
\end{enumerate}

A 3-terminal version of the Dubins network problem can be formulated as follows:

\begin{problem}[The 3-terminal Dubins network problem]
Given three directed points $\mathbf p_1, \mathbf p_2, \mathbf p_3,$ construct a shortest admissible network interconnecting these three directed points.
\end{problem}

An example of such a network is shown in Fig. \ref{FIG2}. By analogy with Steiner trees, a local minimum network is called a \emph{Dubins network}, whereas a global minimum network is called a \emph{minimum Dubins network}.

\begin{figure}	[h!]
	\centering
	\includegraphics[scale=0.8]{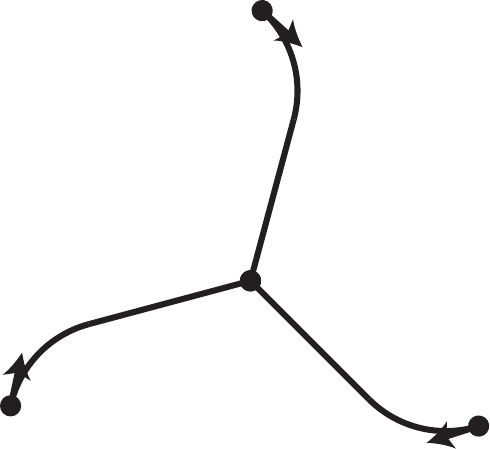}
	\caption{A 3-terminal Dubins network}
	\label{FIG2}
\end{figure}

\section{Dubins networks in the plane}\label{sec:2}
To simplify the analysis in this paper, we will focus on the full case of the 3-terminal problem. That is, we will focus on the case where the network has three edges of type $CS$ meeting at a Steiner point. On the other hand, the degenerate case is not particularly interesting and consists of a concatenation of two Dubins paths, or a slight modification thereof. Determining the conditions under which the degenerate case occurs is beyond the scope of this paper. We begin our analysis with a preliminary proposition. To this end, it will be convenient to adopt the variational approach from the work of Rubinstein and Thomas \cite{Rubinstein} on Steiner trees.

\begin{proposition}
	Suppose that $p_1 p_2 p_3$ is a curve of type $CS$ and that $p_1$ is held fixed while $p_3$ is varied. Then the first variation of length of $p_1 p_2 p_3$ is the negative of the scalar product between the direction of variation and the outward pointing unit vector $\hat{p_3 p_2}$.
	\label{PROP1}
\end{proposition}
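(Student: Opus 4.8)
The plan is to use that holding the directed point $\mathbf p_1=(p_1,\vv p_1)$ fixed freezes the circular component of the curve once and for all. Indeed, the $C$-part is an arc of radius $1$ tangent to $\vv p_1$ at $p_1$, so (once the side is chosen) it lies on a fixed circle; let $\gamma(s)$, $s\ge 0$, be a unit-speed parametrisation of this circle with $\gamma(0)=p_1$, and note that $\gamma$ does not change as $p_3$ is varied. The only freedom left in a $CS$ curve of the stated type is then the arc length $\sigma$ at which the arc switches to the segment, giving the junction $p_2=\gamma(\sigma)$, together with the length $t\ge 0$ of the segment; the $C^1$ requirement forces the segment to leave $p_2$ along the tangent $\gamma'(\sigma)$. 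Thus $p_3=p_3(\sigma,t)=\gamma(\sigma)+t\,\gamma'(\sigma)$ and the total length is $L(\sigma,t)=\sigma+t$.

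First I would differentiate $p_3$ in the two parameters. Since $\gamma$ is unit speed with curvature $1$, the vector $\gamma''(\sigma)$ is a unit vector pointing from $p_2$ toward the centre of the circle, so in particular $\gamma''(\sigma)\perp\gamma'(\sigma)$. Hence $\partial_t p_3=\gamma'(\sigma)$ and $\partial_\sigma p_3=\gamma'(\sigma)+t\,\gamma''(\sigma)$, that is, both partials agree with the unit vector $\gamma'(\sigma)$ up to a vector orthogonal to it. Writing a variation as $\delta p_3=\partial_\sigma p_3\,\delta\sigma+\partial_t p_3\,\delta t$ and taking the scalar product with $\gamma'(\sigma)$ annihilates the orthogonal contributions and leaves $\langle\delta p_3,\gamma'(\sigma)\rangle=\delta\sigma+\delta t=\delta L$. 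Since $\gamma'(\sigma)$ is the direction in which the curve runs from $p_2$ to $p_3$, it equals $-\hat{p_3 p_2}$, minus the outward pointing unit vector at $p_3$; substituting gives $\delta L=-\langle\delta p_3,\hat{p_3 p_2}\rangle$, which is the assertion.

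The one point needing care is that an arbitrary direction of variation of $p_3$ be attainable within this two-parameter family, i.e. that $(\sigma,t)\mapsto p_3$ be a local diffeomorphism; its Jacobian determinant is $\det\big(\gamma'(\sigma)+t\gamma''(\sigma),\,\gamma'(\sigma)\big)=t\,\det\big(\gamma''(\sigma),\gamma'(\sigma)\big)=\pm t$, which is nonzero exactly when $t>0$, i.e. in the genuine (full) $CS$ case under consideration. So I expect the main obstacle to be not the computation, which is routine, but the setup: recognising that the fixed directed point $\mathbf p_1$ pins down the circle, encoding the $C^1$ tangency into the formula $p_3=\gamma(\sigma)+t\,\gamma'(\sigma)$, and identifying the unit tangent at $p_2$ with $-\hat{p_3 p_2}$ so that the final sign comes out as stated.
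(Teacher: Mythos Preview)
Your argument is correct and follows essentially the same route as the paper: both decompose $L$ as arc length plus segment length, use that the segment direction $\gamma'(\sigma)=\mathbf{\hat x}$ is tangent to the fixed circle at $p_2$, and obtain $\delta L=\mathbf{v}'\cdot\mathbf{\hat x}=-\mathbf{v}'\cdot\hat{p_3p_2}$. Your explicit Jacobian check that $(\sigma,t)\mapsto p_3$ is a local diffeomorphism when $t>0$ is a nice addition that the paper leaves implicit.
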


	\begin{proof}
Let $p_1 p_2 p_3$ be a curve of type $CS$ where:
		
		\begin{itemize}
			\item $p_1 p_2$ is a circular arc of radius 1.
			\item $p_2 p_3$ is a straight segment.
		\end{itemize}
		
			\begin{figure}[h!]
		\centering
		\includegraphics[width=.45\linewidth]{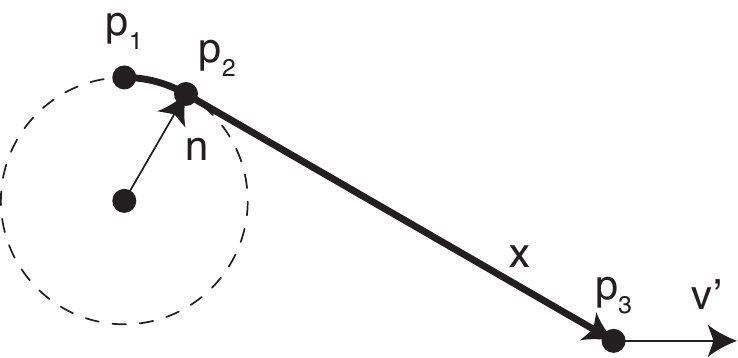}
		\caption{A curve of type $CS$}
		\label{FIG3}
	\end{figure}
		
		In Figure \ref{FIG3}, the point $p_3$ moves along any smooth curve with derivative at its initial position being the vector $\mathbf{v}'$. Let $L$ denote the length of $p_1 p_2 p_3$. The curve $p_1 p_2$ is an arc of a unit circle with centre the origin. The length of $p_1 p_2$ is the angle $\theta$ subtended by $p_1p_2$ at the origin. Let $\mathbf{\hat{n}}$ be the unit vector from the origin to the point $p_2$ and $\mathbf{x}$ the vector from $p_2$ to $p_3$, with length denoted by $x$ and unit vector in the direction of $\mathbf{x}$ denoted by $\mathbf{\hat x}$. Observe that the first variation of the vector $\mathbf{x}$ is equal to the first variation of its head, i.e. $\mathbf{v}'$, minus the first variation of its tail, i.e. $\mathbf{\hat{n}}'$. If the point $p_3$ is perturbed in the direction of $\mathbf{v}'$, then the first variation of length of $p_1 p_2 p_3$ is
		
		\begin{align*}
		L' &= \theta' + x'\\
		&= \mathbf{\hat{n}}' \cdot \mathbf{\hat{x}} + \mathbf{x}' \cdot \mathbf{\hat{x}} \\
		&= \mathbf{\hat{n}}' \cdot \mathbf{\hat{x}} + (\mathbf{v}' - \mathbf{\hat{n}}') \cdot \mathbf{\hat{x}} \\
		&= \mathbf{v}' \cdot \mathbf{\hat{x}}.
		\end{align*}
		
\end{proof}

The \emph{weighted Dubins network problem} is analogous to the Fermat-Weber problem (see, for example, \cite{Volz}). In the 3-terminal case, let $L_i$ denote the length of a CS-path from the $i$th terminal to the variable Steiner point. The weights $w_i$ are the costs per unit length of the three paths. We seek to minimise the weighted sum of path lengths $\sum_{i = 1}^3 w_i L_i$. We may appeal to a local argument in order to show that the angles at the junction are the same as those of a similarly weighted Steiner tree. To that end, consider the 3-terminal case of the Fermat-Weber problem, which is to minimise the weighted sum $\sum_{i=1}^3 w_i \|{\bf x}_i\|$. Here ${\bf x}_i$ denotes the vector from the $i$th terminal to the variable Steiner point. We use a variational argument. Noting that ${\bf x}_i' = {\bf v}'$ and ${\bf x}_i\cdot\hat{\bf x}_i' = 0$ (differentiate $\hat{\bf x}_i\cdot\hat{\bf x}_i$) throughout the perturbation in the direction of $\mathbf{v}'$, we have by the product rule,
\begin{align*}
L' &= \sum_{i=1}^3 w_i \|{\bf x}_i\|'\\
&= \sum_{i=1}^3 w_i ({\bf x}_i\cdot\hat{\bf x}_i)'\\
&= \sum_{i=1}^3 w_i ({\bf x}_i'\cdot\hat{\bf x}_i + {\bf x}_i\cdot\hat{\bf x}_i')\\
&= \sum_{i=1}^3 w_i {\bf v}'\cdot\hat{\bf x}_i\\
&= {\bf v}'\cdot\sum_{i=1}^3 w_i \hat{\bf x}_i.
\end{align*}
At equilibrium we must have $\sum_{i=1}^3 w_i \hat{\bf x}_i = 0$. Mechanically speaking, this means that the sum of the forces ${\bf f}_i = -w_i \hat{\bf x}_i$ acting at the Steiner point is zero. An application of the law of cosines gives the angles $\alpha_i$ at the junction in terms of the weights $w_i$.

\begin{proposition}
For any 3-terminal Dubins network whose paths are of type $CS$, the three Dubins paths meet at angles $\alpha_1, \alpha_2, \alpha_3$ determined as in the case of the Fermat-Weber problem.
\end{proposition}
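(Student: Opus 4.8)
The plan is to reduce the statement to the variational computation already carried out for the Fermat--Weber problem, using Proposition~\ref{PROP1} as the bridge between the curvature-constrained setting and the straight-line setting. The key observation is that Proposition~\ref{PROP1} tells us that when one endpoint of a $CS$-path is held fixed and the other endpoint (here, the Steiner point) is varied, the first variation of the path length is $\mathbf v'\cdot\hat{\mathbf x}$, where $\hat{\mathbf x}$ is the unit vector along the straight portion of the path, directed from the Steiner point outward. This is formally identical to the first variation of $\|\mathbf x_i\|$ in the Fermat--Weber computation, with the unit vector $\hat{\mathbf x}_i$ from the terminal to the Steiner point playing the same role. So although the three paths are $CS$-curves rather than straight segments, the length functional behaves, to first order in a perturbation of the Steiner point, exactly like a weighted sum of Euclidean distances.

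First I would set up the configuration: a $3$-terminal Dubins network with edges of type $CS$, one from each terminal $\mathbf p_i$ to a common Steiner point $s$, and let $L_i$ denote the length of the $i$th path and $\hat{\mathbf t}_i$ the outward-pointing unit tangent of the straight portion at $s$ (i.e.\ the analogue of $\hat{p_3p_2}$ from Proposition~\ref{PROP1}). Next I would perturb $s$ in an arbitrary direction $\mathbf v'$, holding all three terminals and their directions fixed. Applying Proposition~\ref{PROP1} to each of the three paths separately gives $L_i' = \mathbf v'\cdot\hat{\mathbf t}_i$ (in the weighted problem, $w_iL_i' = w_i\,\mathbf v'\cdot\hat{\mathbf t}_i$), so the total first variation is
\begin{align*}
L' = \sum_{i=1}^3 w_i L_i' = \mathbf v'\cdot\sum_{i=1}^3 w_i\hat{\mathbf t}_i.
\end{align*}
Since this must vanish for all $\mathbf v'$ at a local minimum, we obtain $\sum_{i=1}^3 w_i\hat{\mathbf t}_i = 0$, which is precisely the force-balance equation of the Fermat--Weber problem, and hence the angles $\alpha_i$ between the tangents $\hat{\mathbf t}_i$ at $s$ are determined by the same law of cosines relation in terms of the weights $w_i$. (For the unweighted $3$-terminal Dubins network problem one simply sets $w_i = 1$, recovering $120^\circ$ angles.)

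One point that needs a word of care is that a perturbation of $s$ must remain within the space of admissible networks: after moving $s$ to $s + \varepsilon\mathbf v' + o(\varepsilon)$ we must still be able to join each terminal $\mathbf p_i$ to the new Steiner point by an admissible $CS$-path whose straight part arrives at $s$ along a direction that varies smoothly with the perturbation. In the full (non-degenerate) case this is exactly the kind of smooth dependence that makes the first-variation argument of Proposition~\ref{PROP1} valid in the first place, so I would simply invoke that, noting that small perturbations of the Steiner point preserve the $CS$ structure of each edge. I expect this admissibility/smoothness bookkeeping to be the only real obstacle; the rest is a direct transcription of the Fermat--Weber variational argument already given in the text, with $\hat{\mathbf x}_i$ replaced by the outward straight-segment tangent $\hat{\mathbf t}_i$ supplied by Proposition~\ref{PROP1}.
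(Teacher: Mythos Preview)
Your argument is correct (modulo an inessential sign: with your convention that $\hat{\mathbf t}_i$ points outward from $s$, Proposition~\ref{PROP1} gives $L_i' = -\,\mathbf v'\cdot\hat{\mathbf t}_i$, not $+$; this does not affect the force-balance conclusion), but it is not the route the paper takes. You apply Proposition~\ref{PROP1} directly to each $CS$-edge, sum, and read off $\sum_i w_i\hat{\mathbf t}_i = 0$ as a first-order necessary condition. The paper instead argues by localisation and contradiction: it fixes a small disk about the Steiner point, freezes the three points where the paths cross the disk boundary, and notes that inside the disk the picture is literally three weighted straight segments meeting at $s$, i.e.\ a Fermat--Weber configuration. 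If the angles are not the Fermat--Weber angles, one moves $s$ along the negative gradient to shorten this interior configuration, and then repairs the kinks created at the disk boundary by inserting short arcs of radius $\rho$ --- a second length-reducing move --- restoring admissibility. Two shortenings contradict local minimality. Your approach is more direct and actually uses Proposition~\ref{PROP1} for what it was set up to do; the paper's disk trick, on the other hand, bypasses the smooth-dependence/admissibility bookkeeping you flagged, since inside the disk only straight segments are in play and admissibility is restored by hand at the end.
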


	\begin{figure}[h!]
		\centering
		\includegraphics[width=.8\linewidth]{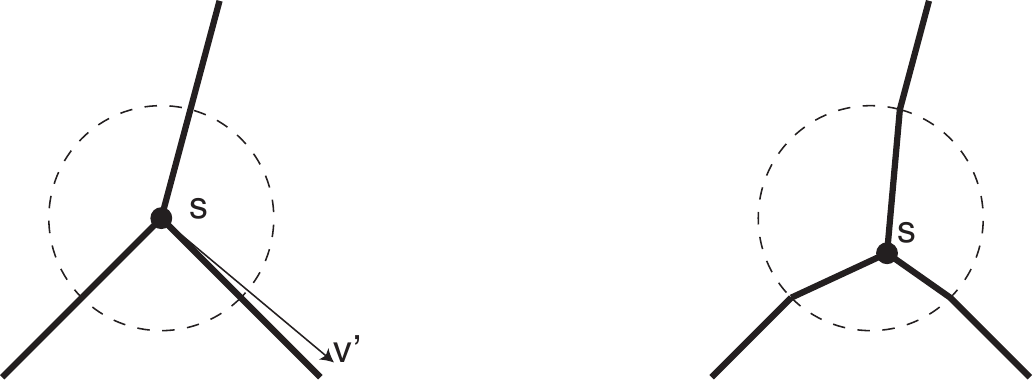}
		\caption{A perturbation in the direction of negative gradient will shorten the network.}
		\label{FIG4}
	\end{figure}

\begin{proof}
It suffices to work locally. Fix a small disk around the Steiner point (see Fig. \ref{FIG4} (left)). Fix the points of intersection of the Dubins paths with the boundary of the disk. Within the disk, if the three straight segments do not make angles $\alpha_1, \alpha_2, \alpha_3$ with each other, then move the Steiner point in the direction of negative gradient (see Fig. \ref{FIG4} (right)). (Recall that the gradient is the negative of the sum of the three weighted outward pointing unit vectors.) Finally, replace the newly created angles at the boundary of the disk by small arcs of circles of radius $\rho$. Note that two length-reducing moves have been made. This shows that the three straight segments meet at angles $\alpha_1, \alpha_2, \alpha_3$.
\end{proof}

We now show that when two terminals are fixed and the third varied, the Steiner point traces out a lima\c{c}on. The lima\c{c}on itself is not required by the algorithm. However, certain geometric constructions (i.e. lines and circles) found in the derivation of the equation of the lima\c{c}on will be used in the algorithm. Assume that all of the paths are of type $CS$. Choose one Dubins circle for each directed point. (Note that there are eight possible combinations, and that each combination can be considered independently.) Take any two of the Dubins circles. There are two possible Dubins topologies that can arise: an \emph{even} topology arises when the circular arcs are similarly oriented, and an \emph{odd} topology arises when the circular arcs are oppositely oriented. For example, the topology shown in Fig. \ref{FIG5} (right) is even, because both circular arcs, i.e. $C$ components incident to $p_1$ and $p_2$, are clockwise oriented. On the other hand, the topology shown in Fig. \ref{FIG5} (left) is odd.

\begin{figure}[h!]
	\centering
	\includegraphics[width=\linewidth]{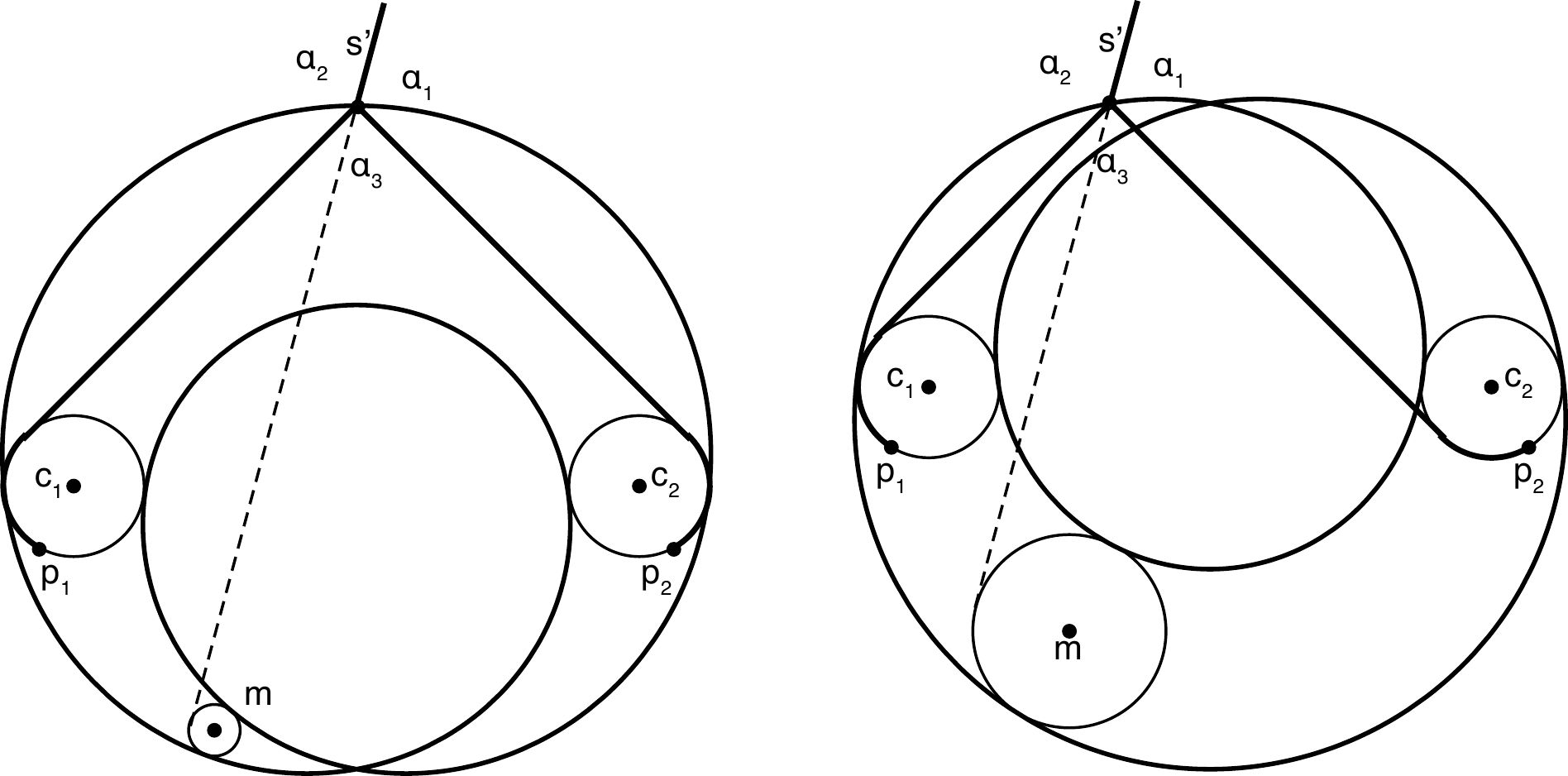}
	\caption{The loci of potential Steiner points for two different topologies}
	\label{FIG5}
\end{figure}

We first need to construct the following circles (see Figures \ref{FIG6} through \ref{FIG8}):
\begin{enumerate}[(i)]
\item The auxiliary circle through points $c_1$, $c_2$, and $s$.
\item The Melzak circle with centre $m$.
\item The circle centred at $s$ and incident to $s'$.
\end{enumerate}
Only the first and last circles will be used in the derivation of the equation of the lima\c{c}on, whereas all three will be used in the algorithm in section 3.

\begin{figure}[h!]
	\centering
	\includegraphics[width=\linewidth]{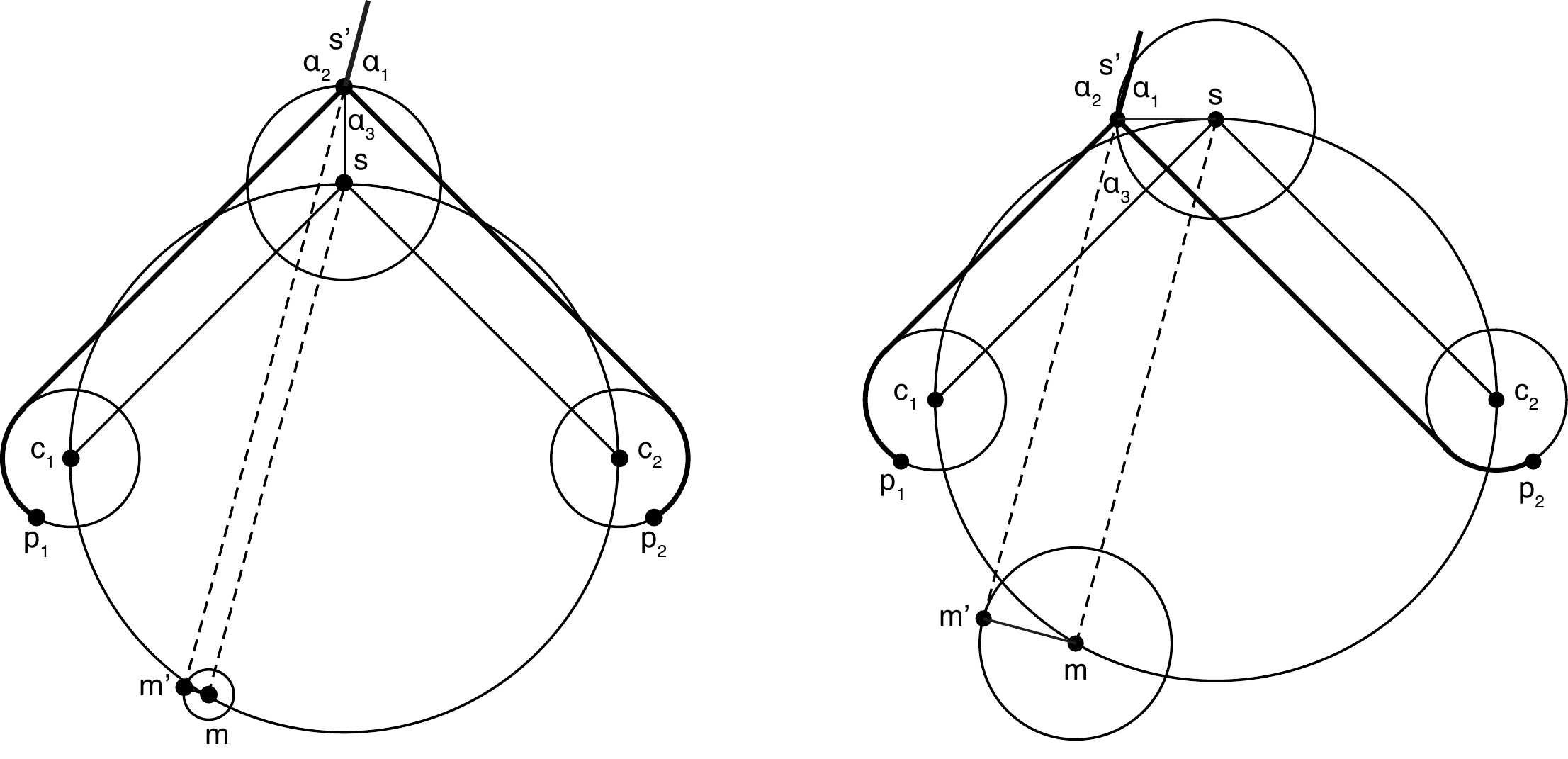}
	\caption{The Melzak-like construction for two different topologies}
	\label{FIG6}
\end{figure}

In order to determine the auxiliary circle, consider two segments $c_1s$ and $c_2s$ parallel to  the first and second paths, respectively, of the sought-after Dubins network. (Refer to Fig. \ref{FIG6}) Since the angle $c_1sc_2 = \alpha_3$, by the Inscribed Angle Theorem, $s$ lies on a circular arc of angle $2\pi - 2\alpha_3$ through $c_1$ and $c_2$. Now let $d$ denote the distance between the two Dubins circles with centres $c_1$ and $c_2$ and $r$ the radius of the auxiliary circle through points $c_1$, $c_2$, and $s$. Using basic trigonometry, we have $\sin(\pi - \alpha_3) = \frac{d}{2r}$, and hence $r = \frac{1}{2} d \csc\alpha_3$.

For either topology, locating the Melzak point $m$ follows the same procedure as in the Fermat-Weber problem \cite{Gilbert1}. Note that the exterior angles of the triangle $c_1c_2m$ are $\alpha_1, \alpha_2, \alpha_3$ (see Fig. \ref{FIG7}). Then this triangle is similar to that formed by the three force vectors ${\bf f}_i$. Hence the Melzak point $m$ is determined. Let $r_m$ and $r_s$ denote the radii of the Melzak circle and the circle whose radius is equal to the distance between between the points $s$ and $s'$, respectively. In order to determine the radius $r_m$ in the case of the odd topology, refer to Figure \ref{FIG8} (left). Observe that $r_m = r_s \sin(\pi - \alpha_1 - \frac{\alpha_3}{2})$ and $1 = r_s \sin(\frac{\alpha_3}{2})$. Eliminating $r_s$ and solving for $r_m$ we get: $r_m = \sin (\alpha_1) \cot \left(\frac{\alpha_3}{2}\right) + \cos (\alpha_1)$. In order to determine the radius $r_m$ in the case of the even topology, refer to Figure \ref{FIG8} (right). Observe that $r_m = r_s \cos(\pi - \alpha_1 - \frac{\alpha_3}{2})$ and $1 = r_s \sin(\frac{\pi}{2}-\frac{\alpha_3}{2})$. Eliminating $r_s$ and solving for $r_m$ we get: $r_m = \sin (\alpha_1) \tan \left(\frac{\alpha_3}{2}\right) - \cos (\alpha_1)$.

\begin{figure}[h!]
	\centering
	\includegraphics[width=0.6\linewidth]{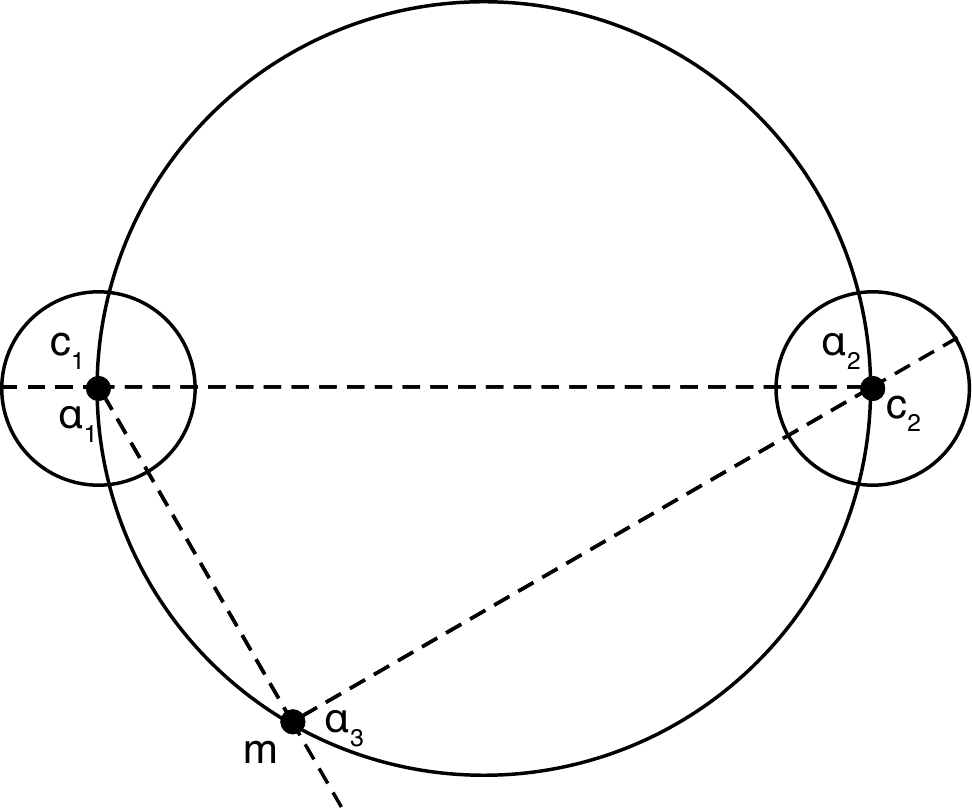}
	\caption{Locating the Melzak point}
	\label{FIG7}
\end{figure}

\begin{figure}[h!]
	\centering
	\includegraphics[width=\linewidth]{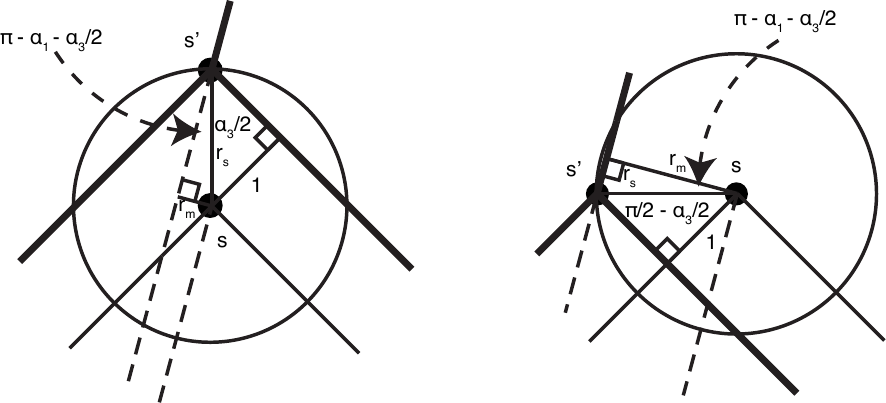}
	\caption{Determining the radii $r_m$ and $r_s$}
	\label{FIG8}
\end{figure}

Finally, we need to determine the distance between between the points $s$ and $s'$. Once again, refer to Figure \ref{FIG8}. We have $r_s = \csc \left(\frac{\alpha_3}{2}\right)$ and $r_s = \sec \left(\frac{\alpha_3}{2}\right)$ in the cases of the odd and even topologies, respectively.

\begin{theorem}
Consider a full Dubins network on three terminals. Suppose that two terminals are fixed and the third varied. Then the Steiner point traces out a lima\c{c}on.
\end{theorem}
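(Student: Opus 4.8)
The plan is to collapse the whole family of full networks to a one–parameter family, to solve explicitly for the Steiner point in a complex coordinate, and to recognise the resulting curve as the image of a circle under a quadratic map. \emph{Reduction.} Fix the two stationary terminals; this fixes their Dubins circles $c_1,c_2$, each of radius $\rho=1$. Fix also the finitely many discrete data of a full network: which Dubins circle is taken at the varied terminal, and the topology along each edge (even or odd, together with which tangent line of each circle is used). There are only finitely many combinations and each is treated by the same calculation, so assume one is chosen. By the proposition just established, the three $CS$–edges always meet at $s$ at the fixed angles $\alpha_1,\alpha_2,\alpha_3$, with $0<\alpha_i<\pi$; in particular the straight portions $\ell_1,\ell_2$ of the two edges at the stationary terminals are tangent lines of the fixed circles $c_1,c_2$, crossing at $s$ at the fixed angle $\alpha_3$. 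As the third terminal ranges over its admissible positions, $(\ell_1,\ell_2)$ runs through all such tangent pairs, and since the crossing angle is constant the pair is determined by a single parameter $\theta$, the angle locating the point of tangency of $\ell_1$ on $c_1$ (equivalently, the direction of $\ell_1$). Hence $s=\ell_1\cap\ell_2$ is a function $s(\theta)$, with $\theta$ sweeping (an arc of) a full circle, and the theorem reduces to identifying this curve.

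\emph{Solving for $s$.} Identify $\mathbb R^2$ with $\mathbb C$ and put $\zeta=e^{-i\theta}$. That $s\in\ell_1$ reads $\operatorname{Re}\big[(s-c_1)\zeta\big]=\varepsilon_1$ and that $s\in\ell_2$ reads $\operatorname{Re}\big[(s-c_2)e^{-i\alpha_3}\zeta\big]=\varepsilon_2$, where $\varepsilon_1,\varepsilon_2\in\{-1,1\}$ record which tangent of each circle is used. Written out and treating $s$ and $\bar s$ as independent unknowns, this is a $2\times2$ linear system whose determinant is the nonzero constant $2i\sin\alpha_3$. Solving by Cramer's rule and simplifying with $\zeta\bar\zeta=1$ — the cofactor multiplying the right–hand side is a constant times $\bar\zeta$, which annihilates the negative–frequency terms and raises the others by one frequency — one is left with
\[
 s(\theta)=A+B\,e^{i\theta}+C\,e^{2i\theta},\qquad A,B,C\in\mathbb C,
\]
with no $e^{-i\theta}$ or $e^{-2i\theta}$ present, where $C=\dfrac{e^{i\alpha_3}(\bar c_1-\bar c_2)}{2i\sin\alpha_3}$ and $B=\dfrac{\varepsilon_1 e^{i\alpha_3}-\varepsilon_2}{i\sin\alpha_3}$, both nonzero since $c_1\neq c_2$ and $0<\alpha_3<\pi$. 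One checks that $|C|=\tfrac12|c_1c_2|\csc\alpha_3$ is precisely the radius $r$ of the auxiliary circle built above, and that $|B|$ is precisely the constant $|ss'|$; so this is the analytic form of the Melzak–type construction preceding the theorem.

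\emph{Identifying the curve.} Setting $w=e^{i\theta}$, which traverses the unit circle, $s$ is the image of that circle under the quadratic map $w\mapsto A+Bw+Cw^2$; completing the square exhibits it as the image of a circle under $z\mapsto z^2$ followed by a similarity, and the image of a circle under $z\mapsto z^2$ is a lima\c{c}on (a cardioid when the circle passes through the origin). Equivalently, in polar coordinates with pole $w_0=A-|C|e^{i\theta_0}$ and polar axis in direction $\theta_0=2\arg B-\arg C$, the locus satisfies $\rho=|B|+2|C|\cos\vartheta$, the standard equation of a lima\c{c}on with $|B|,|C|>0$, so it is a genuine lima\c{c}on and not a degenerate circle or segment. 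Running through the remaining discrete combinations only alters the signs $\varepsilon_1,\varepsilon_2$ and the sign of $\alpha_3$ inside $A,B,C$, never the inequalities $B,C\neq0$, so the conclusion holds in every case; and the genuinely degenerate networks (concatenations of two Dubins paths) are precisely those the ``full'' hypothesis excludes.

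\emph{Anticipated obstacle.} The algebra in the middle step is routine once it is framed in a complex coordinate; the care is needed around it. One must confirm that the reduction really does pin $s$ to a single angular parameter uniformly over all eight Dubins–circle selections and both topologies, so the one calculation is exhaustive. And — this is the heart of the matter — one must check that the negative–frequency terms genuinely cancel, for it is the absence of $e^{-i\theta}$ and $e^{-2i\theta}$, not merely the trigonometric dependence of $s$ on $\theta$, that pins the locus to a lima\c{c}on rather than to an arbitrary quartic trigonometric curve.
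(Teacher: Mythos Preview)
Your argument is correct. Both you and the paper ultimately express the Steiner point as a sum $A+Be^{i\theta}+Ce^{2i\theta}$ with $|B|\in\{\csc(\alpha_3/2),\sec(\alpha_3/2)\}$ and $|C|=\tfrac12 d\csc\alpha_3$, and both read off the polar equation $r=|B|+2|C|\cos\vartheta$. The route to that decomposition, however, is genuinely different. The paper leans on the Melzak--type geometry built up beforehand: it chooses the pole $p$ and the auxiliary centre $q$, writes the position vector of the junction as the phasor sum $\vv{pq}+\vv{qs}+\vv{ss'}$, asserts the moduli and frequencies of the three phasors ``by basic trigonometry'', and then simplifies the norm directly, treating odd and even topologies in two separate paragraphs. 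You instead bypass the auxiliary construction entirely: you encode ``tangent to $c_i$ in a direction fixed up to the constant offset $\alpha_3$'' as a pair of real--linear equations in $s,\bar s$, invert the $2\times2$ system, and let the algebra produce the frequency content. The payoff of your approach is uniformity (the signs $\varepsilon_1,\varepsilon_2$ absorb both topologies in a single computation) and self--containment (nothing from the preceding figures is needed; the identities $|C|=r$ and $|B|=r_s$ emerge as checks rather than inputs). The paper's approach, by contrast, makes the geometric meaning of each term visible from the outset and ties the theorem directly to the constructive algorithm that follows.
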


\begin{proof}
Suppose we have determined the angles between the paths of a 3-terminal Dubins network. In particular, suppose the angle between the first and second paths is $\alpha$. The method for determining the lima\c{c}on (i.e., the locus of potential Steiner points) is as follows. Beginning with the odd topology, choose a polar coordinate system $(r, \theta)$ such that its pole is $p$ and the polar axis points toward $q$. Refer to Figure \ref{FIG9} (left). The point $q$ constitutes the centre of the auxiliary circle from the Melzak algorithm. The point $p$ is the point of self-intersection of the lima\c{c}on. Let $d$ denote the distance between the two Dubins circles. The three phasors $\mathbf v_1 = \vv{pq}$, $\mathbf v_2(\theta) = \vv{qs}$, and $\mathbf v_3(\theta) = \vv{ss'}$ are given by 
$\frac{1}{2} d \csc\alpha (1, 0)$, 
$\frac{1}{2} d \csc\alpha (\cos2\theta, \sin2\theta)$, and 
$\csc \left(\frac{\alpha }{2}\right) (\cos\theta, \sin\theta)$, 
respectively. This is easy to show using basic trigonometry. Then

\begin{align*}
\mathbf r(\theta) &= \mathbf v_1 + \mathbf v_2(\theta) + \mathbf v_3(\theta)\\
&= \frac{1}{2} d \csc\alpha (1, 0) + \frac{1}{2} d \csc\alpha (\cos2\theta, \sin2\theta) + \csc \left(\frac{\alpha }{2}\right) (\cos\theta, \sin\theta).
\end{align*}

After computing the norm of both sides and simplifying, we obtain for the equation of the lima\c{c}on 
\begin{equation*}
r(\theta) = \csc \left(\frac{\alpha }{2}\right)+d \csc (\alpha ) \cos (\theta ).
\end{equation*}

\begin{figure}
\centering
\includegraphics[width=\textwidth]{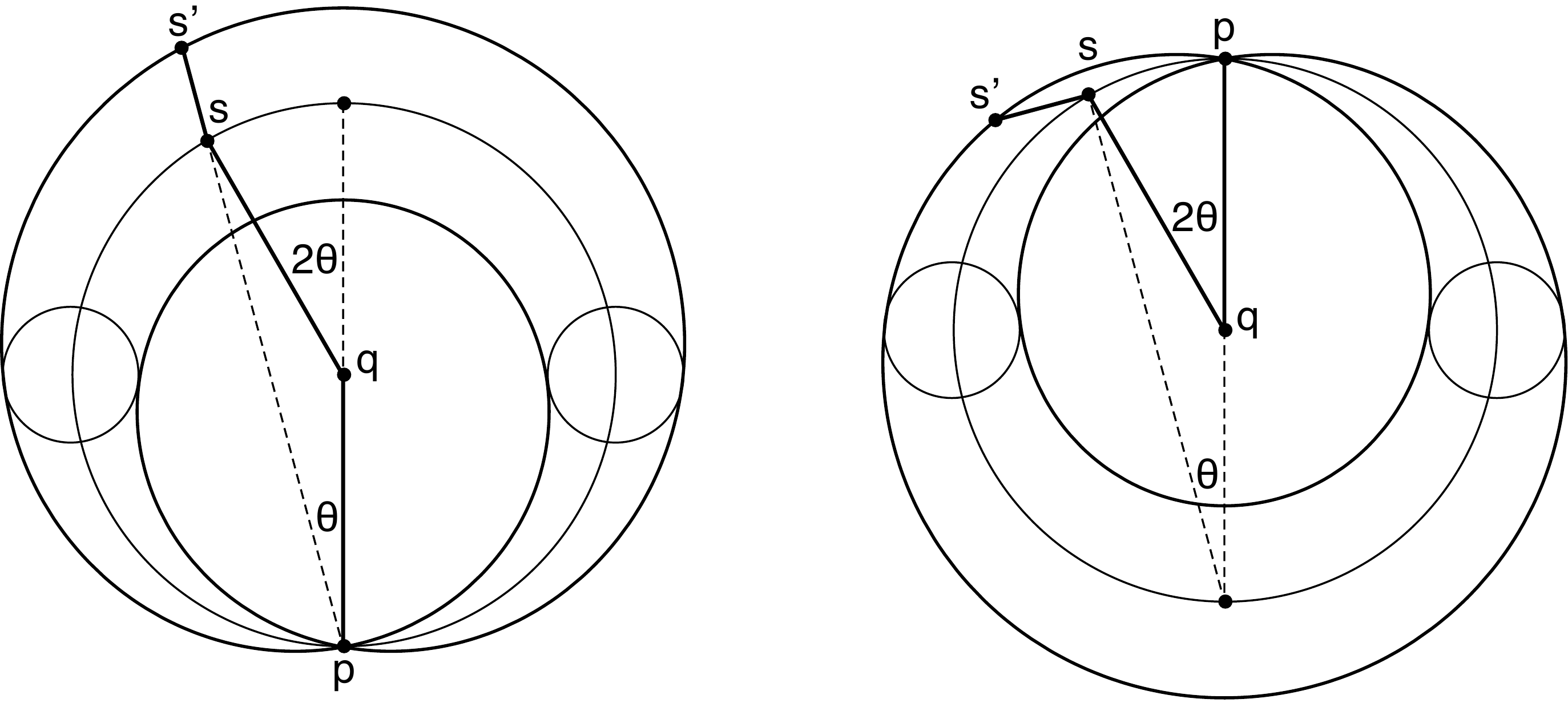}
\caption{The graphs of lima\c{c}ons for two different topologies}
\label{FIG9}
\end{figure}

For the odd topology, choose a polar coordinate system $(r, \theta)$ such that its pole is $p$ and the polar axis points toward $q$. The point $q$ constitutes the centre of the auxiliary circle from the Melzak algorithm. The point $p$ is the point of self-intersection of the lima\c{c}on. The three phasors $\mathbf v_1 = \vv{pq}$, $\mathbf v_2(\theta) = \vv{qs}$, and $\mathbf v_3(\theta) = \vv{ss'}$ are now given by $\frac{1}{2} d \csc\alpha (-1, 0)$, $\frac{1}{2} d \csc\alpha (\cos2\theta, \sin2\theta)$, and $\sec \left(\frac{\alpha }{2}\right) (-\sin\theta, \cos\theta)$, respectively. The resulting equation of the lima\c{c}on is
\begin{equation*}
r(\theta) = \sec \left(\frac{\alpha }{2}\right)-d \csc (\alpha ) \cos (\theta ).
\end{equation*}
\end{proof}

\section{The minimum curvature-constrained Steiner point algorithm in 3D space}

We now present the minimum curvature-constrained Steiner point algorithm for determining the optimal location of the Steiner point in a 3-terminal network in 3D space. The optimal location of the Steiner point is obtained so as to minimise the total length of the network. An iterative process is introduced to first solve the projected problem in the horizontal plane before lifting the solution to 3D space. The location of the Steiner point is then determined in a plane in 3D space. This process is iterated till it converges to the optimal location. In what follows, we give a more detailed explanation of how to determine the location of the junction in the weighted planar version of the problem for both the odd and even topologies. We then explain how to lift the solution to 3D space. Finally, we explain how to update the weights when projecting the solution back onto a horizontal plane.

\subsection*{Solving the problem in the horizontal plane}
\subsubsection*{Determining the optimal location of the Steiner point for the odd topology}
We begin with the odd topology. Fig. \ref{FIG10} illustrates all the points that we consider when locating the Steiner point in the horizontal plane. 
\begin{figure}[h!]
	\centering
	\includegraphics[scale=0.7]{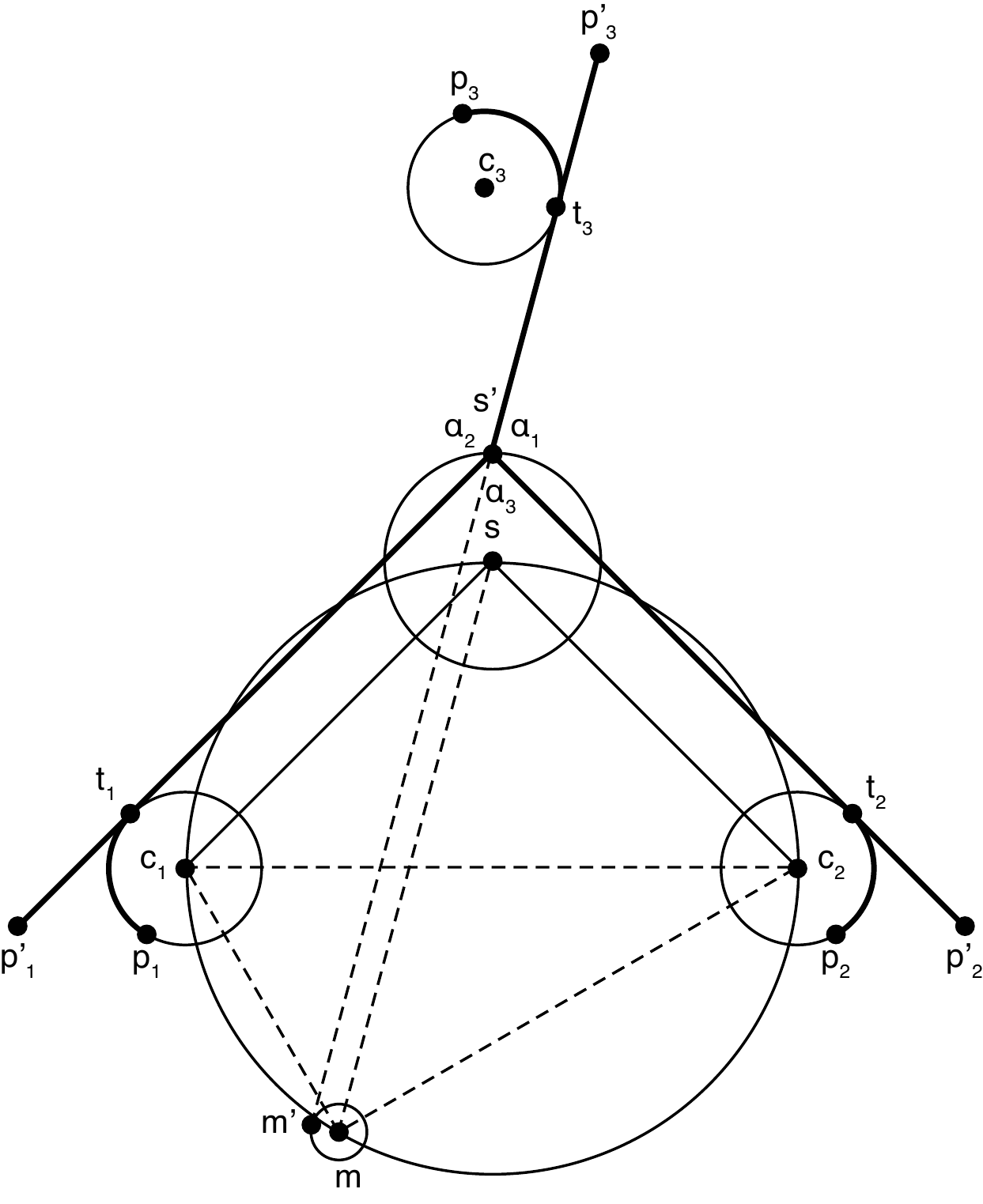}
	\caption{Optimal location of the Steiner point for the odd topology}
	\label{FIG10}
\end{figure}
\begin{enumerate}[1.]
	\item Find the three centres $c_1, c_2, c_3$ of the three chosen Dubins circles.\\
	\begin{figure}[h!]
		\centering
		\includegraphics[scale=0.6]{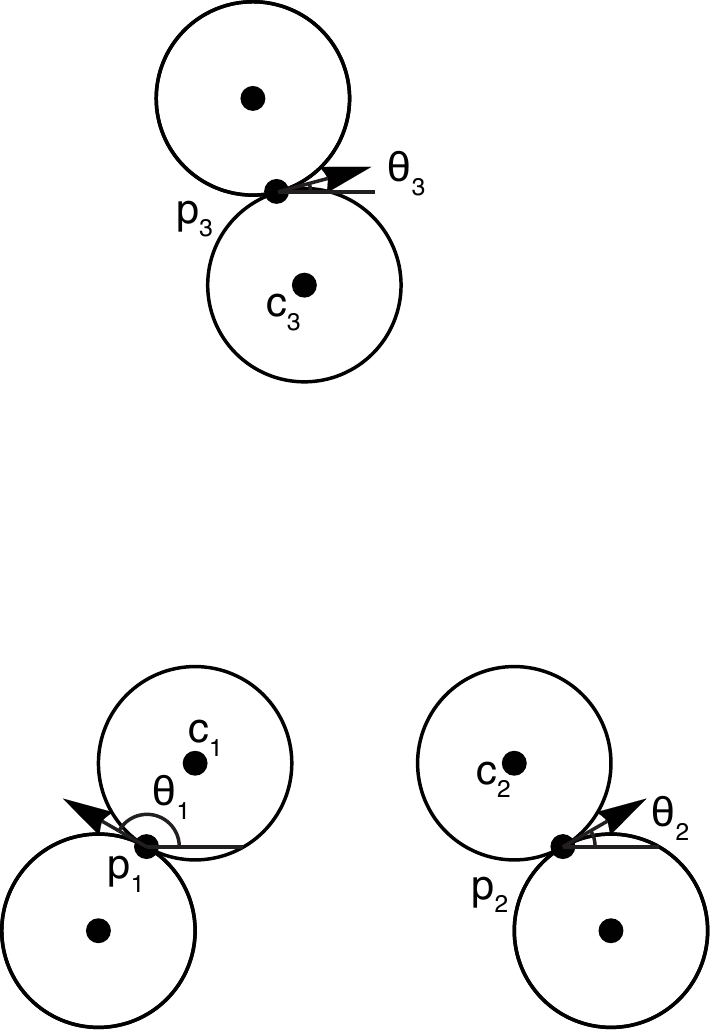}
		\caption{Possible Dubins circles incident to the given terminals}
		\label{FIG11}
	\end{figure}
These have coordinates $(c_{i}^x, c_{i}^y) = (x_i, y_i) \pm (-\sin \theta_{i}, \cos \theta_{i})$ with $i = 1, 2, 3$, as shown in Fig \ref{FIG11}. Note that there are two choices of Dubins circle for each directed point. So we need to keep track of eight possible networks and compare their lengths.
	
	\item Find the Melzak point $m=(m_x, m_y)$.\\
	Consider the centres $c_1$ and $c_2$ of two Dubins circles. The distance $d_3$ between the centres $c_1$ and $c_2$ is given by $d_3=\sqrt{(c_1^x-c_2^x)^2+(c_1^y-c_2^y)^2}$.  At the first iteration, we take the angles $\alpha_i$ with $i = 1, 2, 3$ at the junction to be equal to $2\pi/3$, in which case the weights $w_i$ are equal and $\triangle c_1c_2m$ equilateral. However, in subsequent iterations, the angles $\alpha_i$ with $i =1, 2, 3$ will not in general be equal and will need to be determined from the weights $w_i$. We will start with the special case in which the weights $w_i$ are equal and return to the issue of computing more general weights later. Since $\triangle c_1c_2m$ is an equilateral triangle,
	\begin{equation}
	(c_{i}^x-m_x)^2+(c_{i}^y-m_y)^2=d_3^2 \label{eq:c1}
	\end{equation}
for $i = 1, 2$. By solving Equation \ref{eq:c1} for $i = 1, 2$, we will get two possible solutions for $m$. We compare the distances from each of the two possible solutions to $p_3$. The point that gives the maximum distance is picked as the location for $m$.
	
	\item Find the tangent point $t_3$ on circle $C_3$.\\
	 The tangent point $t_3=(t_3^x,t_3^y)$ is the point where the Simpson line tangent to the Melzak circle with centre $m$ and radius $\sin (\alpha_1) \cot \left(\frac{\alpha_3}{2}\right) + \cos (\alpha_1)$ is tangent to $C_3$. At the first iteration, we take the angles $\alpha_i$ with $i = 1, 2, 3$ at the junction to be equal to $2\pi/3$, in which case the weights $w_i$ are equal and the Melzak circle degenerates to a point.
	
Now consider two vectors $\mathbf u_3$ and $\mathbf r_3$ along the Simpson line and radius of $C_3$, respectively, where $\mathbf u_3=(t_3^x-m'_{x})\mathbf i+(t_3^y-m'_{y})\mathbf j$ and $\mathbf r_3=(t_3^x-c_3^x)\mathbf i+(t_3^y-c_3^y)\mathbf j$. We obtain the equations
	\begin{eqnarray}
	(c_3^x-t_3^x)^2+(c_3^y-t_3^y)^2=1 \label{eq:c2}\\
	(t_3^x-m'_{x})(t_3^x-c_3^x)+(t_3^y-m'_{y})(t_3^y-c_3^y)=0. \label{eq:c3}
	\end{eqnarray}
	Equation \ref{eq:c2} is the equation of the circle $C_3$. The vectors $\mathbf u_3$ and $\mathbf r_3$ are perpendicular, so $\mathbf u_3. \mathbf r_3=0$, which is expressed in Equation \ref{eq:c3}.
	
	 Similarly, consider two vectors $\mathbf u_3$ and $\mathbf r_m$ along the Simpson line and radius of the Melzak circle $C_m$, respectively, where $\mathbf u_3=(t_3^x-m'_{x})\mathbf i+(t_3^y-m'_{y})\mathbf j$ and $\mathbf r_m=(m'_x-m_x)\mathbf i+(m'_y-m_y)\mathbf j$. We obtain the equations
	\begin{eqnarray}
	(m_x-m'_x)^2+(m'_y-m_y)^2=(\sin (\alpha_1) \cot \left(\frac{\alpha_3}{2}\right) + \cos (\alpha_1))^2 \label{eq:c4}\\
	(t_3^x-m'_{x})(m'_x-m_x)+(t_3^y-m'_{y})(m'_y-m_y)=0. \label{eq:c5}
	\end{eqnarray}
	Equation \ref{eq:c4} is the equation of the circle $C_m$. The vectors $\mathbf u_3$ and $\mathbf r_m$ are perpendicular, so  $\mathbf u_3. \mathbf r_m=0$, which is expressed in Equation \ref{eq:c5}. The coordinates of $m'$ and $t_3$ can be found by  solving simultaneous Equations \ref{eq:c2}, \ref{eq:c3}, \ref{eq:c4} and \ref{eq:c5}.
	\item Find the point $s=(s_{x},s_{y})$.\\
	First calculate the equation of the circle $C$ through the points $c_1,c_2,m$, in the form of $(x-a)^2+(y-b)^2=r^2$. We have
	\begin{eqnarray}
	(c_1^x-a)^2+(c_1^y-b)^2=r^2 \label{eq:c6}\\
	(c_2^x-a)^2+(c_2^y-b)^2=r^2 \label{eq:c7}\\
	(m_x-a)^2+(m_y-b)^2=r^2. \label{eq:c8}
	\end{eqnarray}
	By solving Equations \ref{eq:c6}, \ref{eq:c7}, \ref{eq:c8}, the coefficients $a,b,r$ can be determined. In addition, we have
	\begin{eqnarray}
	(s_x-a)^2+(s_y-b)^2=r^2 \label{eq:c9}\\
	\frac{s_y-m_y}{s_x-m_x} =\frac{t_3^y-m'_y}{t_3^x-m'_x} \label{eq:c10}.
	\end{eqnarray}
	 Equations \ref{eq:c9} and \ref{eq:c10} are obtained from the fact that the point $s$ lies on the circle $C$ and the line $ms$ is parallel to the Simpson line $m't_3$.
	The coordinates of $s=(s_{x},s_{y})$ can be calculated by solving Equations \ref{eq:c9} and \ref{eq:c10}.
	\item Find the junction $s'=(s'_x,s'_y)$.\\
	 The junction $s'$ lies on the intersection of the Simpson line $m't_3$ and the circle with centre $s$ and radius $\csc \left(\frac{\alpha_3}{2}\right)$. We have the equations
	\begin{eqnarray}
	 (s_x-s'_x)^2+(s_y-s'_y)^2= \csc^2(\alpha_3) \label{eq:c11}\\
	\frac{s'_y-m'_y}{s'_x-m'_x} =\frac{t_3^y-m'_y}{t_3^x-m'_x}. \label{eq:c12}
	\end{eqnarray}
	By solving Equations \ref{eq:c11} and \ref{eq:c12}, the coordinates of $s'$ can be found.
	
	\item Find the points of tangency $t_1,t_2$.\\
	The point $t_1=(t_1^x,t_1^y)$ is the tangent point of the line $s't_1$ and Dubins circle $C_1$. Consider two vectors $\mathbf u_1$ and $\mathbf r_1$ along the line $s't_1$ and radius of $C_1$, respectively, where $\mathbf u_1=(t_1^x-s'_x)\mathbf i+(t_1^y-s'_y)\mathbf j$ and $\mathbf r_1=(t_1^x-c_1^x)\mathbf i+(t_1^y-c_1^y)\mathbf j$.  We obtain the equations
	\begin{eqnarray}
	(c_1^x-t_1^x)^2+(c_1^y-t_1^y)^2=1 \label{eq:c13}\\
	(t_1^x-s'_x)(t_1^x-c_1^x)+(t_1^y-s'_y)(t_1^y-c_1^y)=0. \label{eq:c14}
	\end{eqnarray}
	Equation \ref{eq:c12} is the equation of the circle $C_1$. The vectors $\mathbf u_1$ and $\mathbf r_1$ are perpendicular, so $\mathbf u_1. \mathbf r_1=0$, which is expressed in Equation \ref{eq:c14}. The coordinates of $t_1$ can be found by solving Equations \ref{eq:c13} and \ref{eq:c14}.
	
	Similarly, the point $t_2=(t_2^x,t_2^y)$ is the tangent point of the line $s't_2$ and  Dubins circle $C_2$. Consider two vectors $\mathbf u_2$ and $\mathbf r_2$ along the line $s't_2$ and radius of $C_2$, respectively, where  $\mathbf u_2=(t_2^x-s'_x)\mathbf i+(t_2^y-s'_y)\mathbf j$ and $\mathbf r_2=(t_2^x-c_2^x)\mathbf i+(t_2^y-c_2^y)\mathbf j$. We obtain the equations
	\begin{eqnarray}
	(c_2^x-t_2^x)^2+(c_2^y-t_2^y)^2=1 \label{eq:c15}\\
	(t_2^x-s'_x)(t_2^x-c_2^x)+(t_2^y-s'_y)(t_2^y-c_2^y)=0. \label{eq:c16}
	\end{eqnarray}
	Equation \ref{eq:c15} is obtained from the radius of $C_2$. The vectors $\mathbf u_2$ and $\mathbf r_2$ are perpendicular, so $\mathbf u_2. \mathbf r_2=0$, which is expressed in Equation \ref{eq:c16}. The coordinates of $t_2$ can be found by  solving Equations \ref{eq:c15} and \ref{eq:c16}.
	
	\end{enumerate}
	
	\subsubsection*{Determining the optimal location of the Steiner point for the even topology}
	The method is similar to that used for the odd topology, and we abbreviate it somewhat.
\begin{figure}[h!]
	\centering
	\includegraphics[scale=0.8]{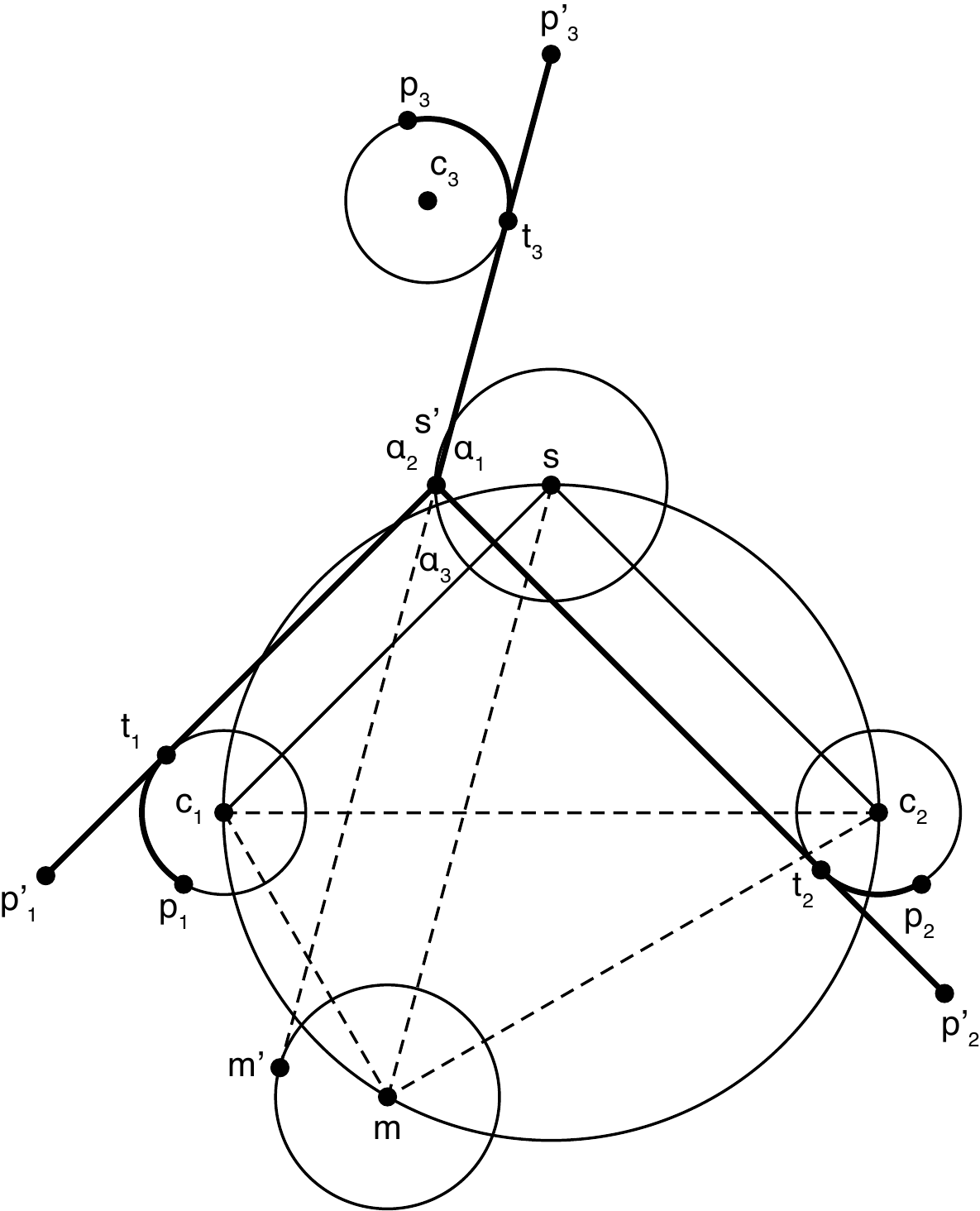}
	\caption{Optimal location of the Steiner point for the even configuration}
	\label{FIG12}
\end{figure}
\begin{enumerate}[1.]
	\item Find the points of tangency $m'$ and $t_3$ on the Melzak circle and circle $C_3$, respectively, that determine the Simpson line.\\
	Consider the Melzak circle $C_m$ with centre $m$ and radius $\sin (\alpha_1) \tan \left(\frac{\alpha_3}{2}\right) - \cos (\alpha_1)$, as shown in Fig. \ref{FIG12}.
	The point $m'$ is such that a line through $m'$  is tangent to both the circles $C_m$ and $C_3$.  The point $t_3=(t_3^x,t_3^y)$ is the tangent point of the Simpson line through $m'$ and the Dubins circle $C_3$.\\
	Consider two vectors $\mathbf u_3$ and $\mathbf r_3$ along the Simpson line and radius of $C_3$, respectively, where $\mathbf u_3=(t_3^x-m'_{x})\mathbf i+(t_3^y-m'_{y})\mathbf j$ and $\mathbf r_3=(t_3^x-c_3^x)\mathbf i+(t_3^y-c_3^y)\mathbf j$. We obtain the equations
	\begin{eqnarray}
	(c_3^x-t_3^x)^2+(c_3^y-t_3^y)^2=1 \label{eq:c35}\\
	(t_3^x-m'_{x})(t_3^x-c_3^x)+(t_3^y-m'_{y})(t_3^y-c_3^y)=0. \label{eq:c36}
	\end{eqnarray}
	Equation \ref{eq:c35} is obtained from the radius of $C_3$. The vectors $\mathbf u_3$ and $\mathbf r_3$ are perpendicular, so $\mathbf u_3. \mathbf r_3=0$, which is expressed in Equation \ref{eq:c36}. 
	
	Similarly, consider two vectors $\mathbf u_3$ and $\mathbf r_m$ along the Simpson line and radius of $C_m$, respectively, where $\mathbf u_3=(t_3^x-m'_{x})\mathbf i+(t_3^y-m'_{y})\mathbf j$ and $\mathbf r_m=(m'_x-m_x)\mathbf i+(m'_y-m_y)\mathbf j$. We obtain the equations
	\begin{eqnarray}
	 (m_x-m'_x)^2+(m'_y-m_y)^2= (\sin (\alpha_1) \tan \left(\frac{\alpha_3}{2}\right) - \cos (\alpha_1))^2 \label{eq:c37}\\
	(t_3^x-m'_{x})(m'_x-m_x)+(t_3^y-m'_{y})(m'_y-m_y)=0. \label{eq:c38}
	\end{eqnarray}
	Equation \ref{eq:c37} is the equation of the circle $C_m$. The vectors $\mathbf u_3$ and $\mathbf r_m$ are perpendicular, so  $\mathbf u_3. \mathbf r_m=0$, which is expressed in Equation \ref{eq:c38}. The coordinates of $m'$ and $t_3$ can be found by  solving simultaneous Equations \ref{eq:c35}, \ref{eq:c36}, \ref{eq:c37} and \ref{eq:c38}.
	
	 \item Find the junction $s'=(s'_{x},s'_{y})$.\\
	The point $s'$ lies on the intersection of the Simpson line $m't_3$ and the circle with centre $s$ and radius $\sec \left(\frac{\alpha_3}{2}\right)$. We obtain the equations
	\begin{eqnarray}
	(s_x-s'_x)^2+(s_y-s'_y)^2= \sec^2(\alpha_3) \label{eq:c39}\\
	\frac{s'_y-m'_y}{s'_x-m'_x} =\frac{t_3^y-m'_y}{t_3^x-m'_x}. \label{eq:c40}
	\end{eqnarray}
	The coordinates of $s'$ can be found by  solving  Equations \ref{eq:c39} and \ref{eq:c40}.
\end{enumerate}

\subsection*{Solving the problem in 3D space}
\begin{enumerate}[1.]
	\item Find the lifted points $p_{1}'=(x'_{1},y'_{1})$, $p_{2}'=(x'_{2},y'_{2})$, $p_3'=(x'_3,y'_3)$.\\
	The coordinates of terminals $p_{1}=(x_{1},y_{1},z_{1}),p_{2}=(x_{2},y_{2},z_{2}),p_3=(x_3,y_3,z_3)$ are given. 
	\begin{figure}[h!]
		\centering
		\includegraphics[scale=0.8]{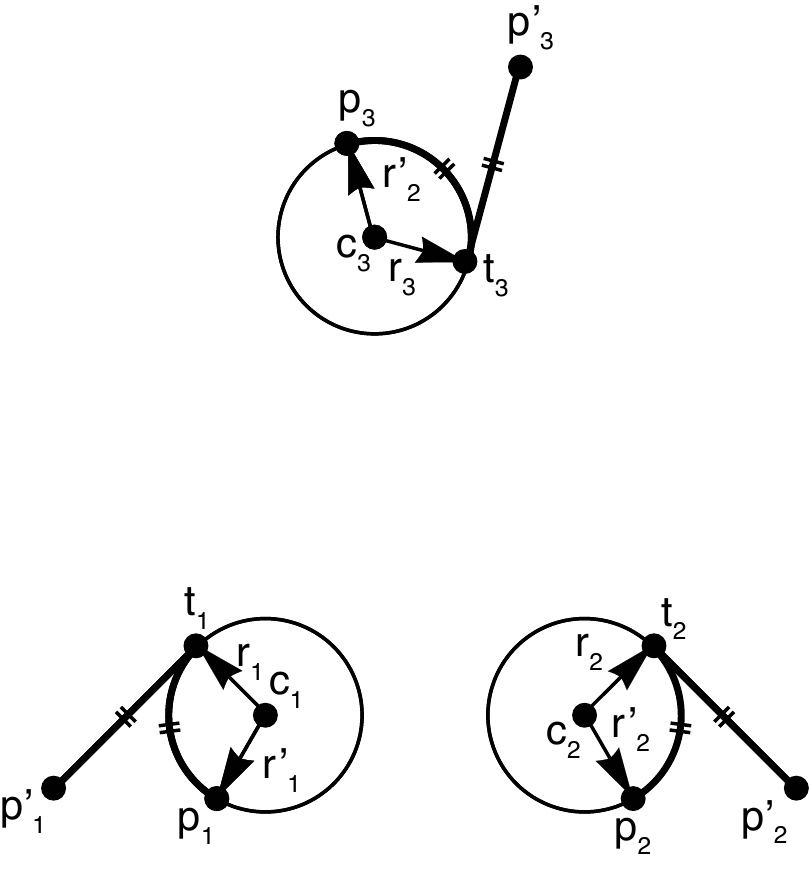}
		\caption{Locations of the points $p_1', p_2', p_3'$}
		\label{FIG13}
	\end{figure}
	The point $p'_i$ with $i = 1, 2, 3$ is such that $|t_ip_i|=|t_ip'_i|$, where $|t_ip_i|=\beta_i$. We have
	\begin{equation}
	\mathbf r'_i. \mathbf r_i= ||\mathbf r'_i|| \hspace{1mm} ||\mathbf r_i||\cos \beta_i,
	\end{equation}
from which it follows that
	\begin{equation}
	\beta_i =\arccos \bigg(\frac{\mathbf r'_i. \mathbf r_i}{||\mathbf r'_i|| \hspace{1mm} ||\mathbf r_i||}\bigg)
	\end{equation}
for $i = 1, 2, 3$, where $\mathbf r_i=(t_i^x-c_i^x)\mathbf i+(t_i^y-c_i^y)\mathbf j$ and $\mathbf r'_i=(x_i-c_i^x)\mathbf i+(y_i-c_i^y)\mathbf j$. In addition, we have
	\begin{eqnarray}
	(x'_i-t_i^x)^2+(y'_i-t_i^y)^2=|t_ip'_i|^2=\beta_i^2 \label{eq:c17}\\
	\frac{s'_y-y'_i}{s'_x-x'_i} =\frac{s'_y-t_i^x}{s'_x-t_i^y} \label{eq:c18}
	\end{eqnarray}
for $i = 1, 2, 3$. By solving Equations \ref{eq:c17} and \ref{eq:c18}, the coordinates of $p'_i$ with $i = 1, 2, 3$ can be found.\\
	\item Find the plane on which the Steiner point lies.\\
	The equation of a plane is given by $ax+by+cz=d$, where $d=ax_{0}+by_{0}+cz_{0}$ for any point $(x_{0},y_{0},z_{0})$ in the plane. The normal vector $(a, b, c)$ is determined by the cross product of two vectors, $p'_0p'_1 = (x'_1 - x'_0)\hat{i}+(y'_1 - y'_0)\hat{j} + (z_1 - z_0)\hat{k}$ and $p'_0p'_2 = (x'_2 - x'_0)\hat{i} + (y'_2 - y'_0)\hat{j} + (z_2 - z_0)\hat{k}$. That is, $a=(y'_{1}-y'_{0})(z_{2}-z_{0})-(y'_{2}-y'_{0})(z_{1}-z_{0})$, $b=(x'_{2}-y'_{0})(z_{1}-z_{0})-(x'_{1}-x'_{0})(z_{2}-z_{0})$, $c=(x'_{1}-x'_{0})(y'_{2}-y'_{0})-(x'_{2}-x'_{0})(y'_{1}-y'_{0})$. The Steiner point lies on the intersection of this plane and the vertical line $(s'_x,s'_y)$, hence
	\begin{equation}
	a(s'_x-x_{0})+b(s'_y-y_{0})+c(s'_z-z_{0})=0,
	\end{equation}
from which it follows that
	\begin{equation}
	s'_z=z_{0}- \frac{a(s'_x-x_{0})+b(s'_y-y_{0})}{c}.  \label{eq:c24}
	\end{equation}
	By solving Equation \ref{eq:c24}, the $z$ coordinate of the Steiner point can be obtained.
	\item Find the gradients of the line segments  $p'_1s',p'_2s',p'_3s'$.\\
	Three gradients $g_1,g_2,g_3$ \cite{Brazil_a} are defined for three straight line segments
	\newline$p'_1s',p'_2s',p'_3s'$ and given by Equation \ref{eq:c25} for $i = 1, 2, 3$ respectively. Thus, for $i 
	= 1, 2, 3$
	\begin{equation}
	g_i= \frac{|s'_z-z_i|} {\sqrt{(s'_x-x'_i)^2+(s'_y-y'_i)^2}}. \label{eq:c25}
	\end{equation}
	\item Find the weights of edges $p'_1s',p'_2s',p'_3s'$.\\
	The weights $w_1,w_2,w_3$ are used to project the solution back onto the horizontal plane. The gradient of a line segment can be used to obtain the weight of the corresponding edge. The weights for line segments $p'_1s',p'_2s',p'_3s'$ are $w_1,w_2,w_3$ respectively, where $w_i=1/\sqrt{1+g_i^2}$ for $i = 1, 2, 3$. 
	\item Find the angles $\alpha_1, \alpha_2, \alpha_3$.\\
	The optimum angles between the three edges can be computed as functions of the three weights, that is,
	\begin{equation}
	\alpha_i = \pi - \arccos\bigg(w_i\frac{w_0^2 + w_1^2 + w_2^2 - 2w_i^2}{2w_0w_1w_2}\bigg) \label{eq:c28}
	\end{equation}
	\begin{figure}[h!]
		\centering
		\includegraphics[scale=0.6]{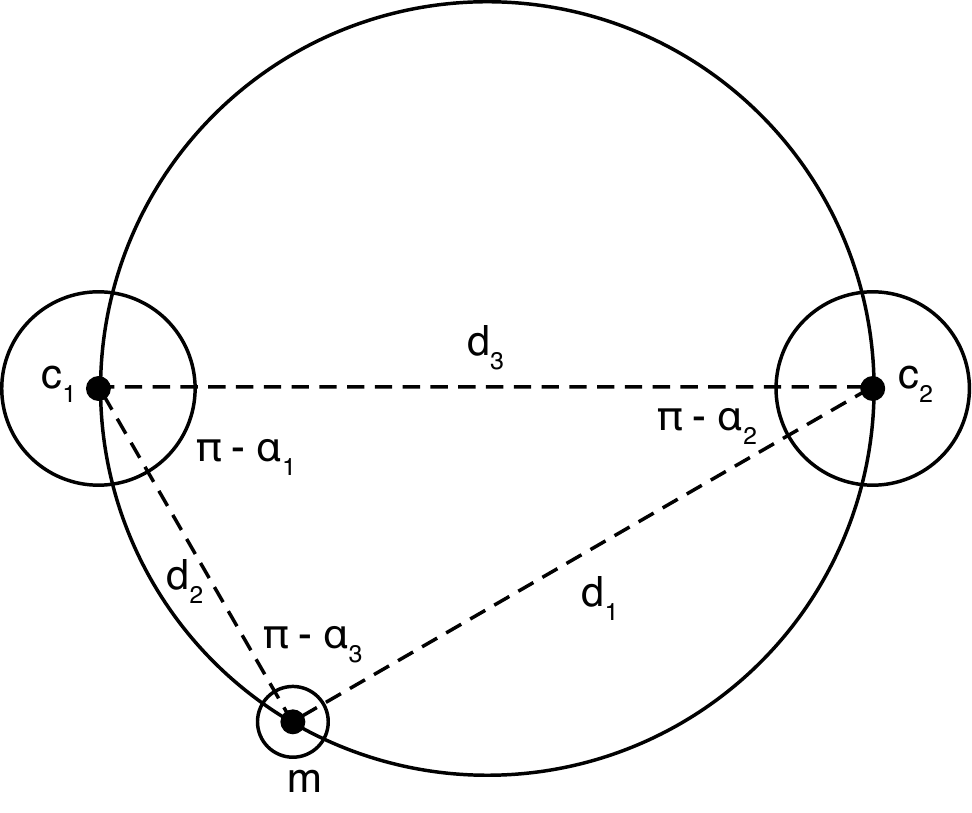}
		\caption{Locating the point $m$}
		\label{FIG14}
	\end{figure}\\
	for $i = 1, 2, 3$. Now the triangle $c_1c_2m$ is drawn based on the angles calculated from Equation \ref{eq:c28}. This is shown in Fig. \ref{FIG14}. The coordinates of $m$ can be calculated as follows.
	By applying the sine rule to the triangle $c_1c_2m$:
	\begin{eqnarray}
	 d_i=\frac{d_3\sin(\pi - \alpha_i)}{\sin(\pi - \alpha_3)} \label{eq:c31}
	\end{eqnarray}
	for $i = 1, 2$. The distances from $c_1$ to $m$ and $c_2$ to $m$ are $d_2$ and $d_1$, respectively, where
	\begin{eqnarray}
	(c_1^x-m_x)^2+(c_1^y-m_y)^2=d_2^2 \label{eq:c33}\\
	(c_2^x-m_x)^2+(c_2^y-m_y)^2=d_1^2. \label{eq:c34}
	\end{eqnarray}
	By solving Equations \ref{eq:c33} and \ref{eq:c34}, the coordinates of $m$ can be found.
\end{enumerate}

The steps discussed in the previous section can be expressed in an algorithm.
\begin{algorithm}[h!]
	\DontPrintSemicolon
	\KwIn{The directed points $p_1$, $p_2$, $p_3$, the radius of the curvature $r$ and tolerance $\epsilon$}
	\KwOut{The optimal location of the Steiner point and optimal length of the network}
	Calculate the centres $c_1$, $c_2$, $c_3$ of three Dubins circles. \;
	{ Initialisation: \textit{$\alpha_1(0)=2\pi/3$, $\alpha_2(0)=2\pi/3$, $\alpha_3(0)=2\pi/3$}\; $i=0$\; \Repeat{ $|length(i)-length(i-1)| < \epsilon$}{%
			\textbf{In the horizontal plane}\\
			Find the Melzak point $m$ using two different methods depending on the topology, the tangent points $t_3,t_2,t_1$, the junction $s'$, and lifted the points $p'_1$, $p'_2$, $p'_3$.\\
			\textbf{In 3D space}\\
			Find the plane on which $p_1',p_2',p_3'$ lie, the $z$ coordinate of the Steiner point, the gradients of the line segments  $p'_1s',p'_2s',p'_3s'$, the weights of edges $p'_1s',p'_2s',p'_3s'$.\\
			Find the angles $\alpha_1, \alpha_2, \alpha_3$ based on the weights.
			\\ \nl $i = i + 1$\; 
		}
	}
	\caption{The minimum curvature-constrained Steiner point algorithm}
	\label{algo:2}
\end{algorithm}

	\begin{figure}
			\centering
			\includegraphics[width=\textwidth]{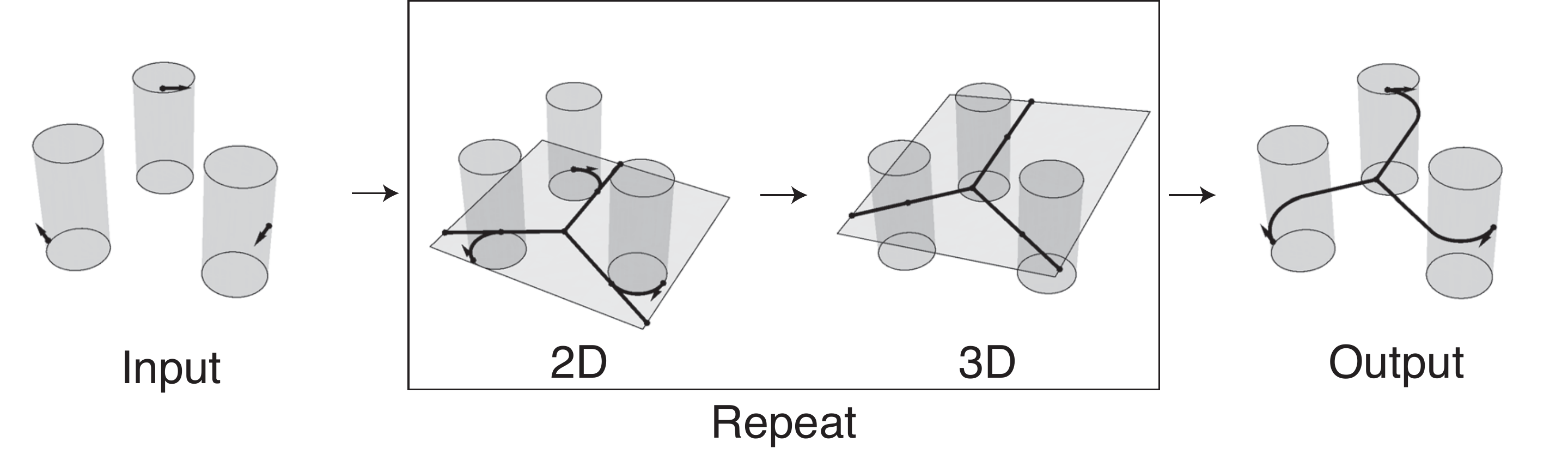}
			\caption{The minimum curvature-constrained Steiner point algorithm}
			\label{FIG15}
		\end{figure}

\newpage \section{Convergence of the algorithm}\label{sec:2}
The admissible network converges to the minimum Dubins network with each iteration of the algorithm, because the length of the admissible network is convex with respect to the position of the Steiner point. Here, we show just that.

\begin{proposition}
The length of an admissible network is convex with respect to the position of the Steiner point.
\end{proposition}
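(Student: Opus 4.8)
The plan is to write the total length as a sum of three one-edge length functions and to prove that each summand is convex in the position of the Steiner point $s'$. Write $L(s') = L_1(s') + L_2(s') + L_3(s')$, where $L_i(s')$ is the length of the $i$-th edge: a $CS$-path running from the fixed directed terminal $\mathbf p_i$ along its chosen Dubins circle $C_i$ (centre $c_i$, radius $1$) to $s'$, consisting of a circular arc from $p_i$ to a tangent point $t_i = t_i(s')$ followed by the straight segment $t_i s'$. Thus $L_i(s')$ equals the turning angle of the arc plus the tangent length $|t_i s'| = \sqrt{|s'-c_i|^2 - 1}$. Since a sum of convex functions is convex, it suffices to prove that each $L_i$ is convex on the region of Steiner-point positions for which the $CS$-edge has a stable structure, which is where the algorithm operates.

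To prove $L_i$ convex, first apply Proposition~\ref{PROP1} with the terminal $p_i$ held fixed and the free endpoint $s'$ varied: this gives $\nabla L_i(s') = \hat u_i(s')$, the unit vector pointing from $t_i$ to $s'$ along the final segment. Hence $L_i$ is convex iff its gradient is monotone, i.e. $\langle \hat u_i(a) - \hat u_i(b),\, a - b\rangle \ge 0$ for all $a,b$ in the relevant domain. Writing $a = t_a + |t_a a|\,\hat u_i(a)$ and $b = t_b + |t_b b|\,\hat u_i(b)$ with $t_a, t_b$ the tangent points on $C_i$, one obtains
\[
\langle \hat u_i(a) - \hat u_i(b),\, a - b\rangle
= \langle \hat u_i(a) - \hat u_i(b),\, t_a - t_b\rangle
+ \big(|t_a a| + |t_b b|\big)\big(1 - \hat u_i(a)\cdot\hat u_i(b)\big).
\]
The second term is non-negative. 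For the first, parametrise $C_i$ by its central angle $\phi$, so that $t = c_i + (\cos\phi,\sin\phi)$ and $\hat u_i = (-\sin\phi,\cos\phi)$ for the fixed sense of traversal; the dot product then factors as $(\cos\phi_a - \cos\phi_b)$ times $\big[(\sin\phi_b - \sin\phi_a) + (\sin\phi_a - \sin\phi_b)\big] = 0$. So the gradient map is monotone and $L_i$ is convex. (Equivalently, one computes the Hessian of $L_i$ in polar coordinates centred at $c_i$: it is symmetric, it annihilates $\hat u_i$ because $|\hat u_i|^2 \equiv 1$, and its remaining eigenvalue equals $1/|t_i s'| > 0$; hence it is positive semidefinite, with determinant zero.)

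I expect the main obstacle to lie not in this infinitesimal computation but in the global behaviour of a single $L_i$. With the Dubins circle and the sense of traversal fixed, the turning angle of the arc jumps by $2\pi$ as $s'$ crosses the ray of points lying directly ahead of $p_i$ (where the tangent point sweeps through $p_i$), so $L_i$ is not continuous on the whole plane; there is also a mild loss of smoothness along the locus where the segment $p_i s'$ becomes tangent to $C_i$. One must therefore restrict to the connected region of feasible Steiner positions, check that $L_i$ is $C^1$ there (the gradient $\hat u_i$ extends continuously across the tangency locus), and only then invoke the monotone-gradient criterion; finiteness of $L$ keeps all three edges in such regions simultaneously, and the same argument applies edge-by-edge in the lifted problem. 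Granting this, each $L_i$ is convex and hence so is $L = L_1 + L_2 + L_3$.
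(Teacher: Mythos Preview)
Your argument is correct and follows the same overall strategy as the paper: decompose the network length into three $CS$-edge contributions, invoke Proposition~\ref{PROP1} to identify the gradient of each as the outward unit vector along the straight segment, and then verify convexity of each summand (hence of their sum). The paper carries this out in the 3D setting with a helical arc (a segment wrapped on a cylinder) and checks convexity by computing the second variation $L'' = \mathbf v'\cdot \hat{p_2p_3}'$ along a linear perturbation, declaring the nonnegativity of this dot product ``geometrically obvious''. You instead verify monotonicity of the gradient map via an explicit parametrisation of the Dubins circle, which turns the paper's geometric assertion into an identity (your cross-term $\langle \hat u_i(a)-\hat u_i(b),\, t_a-t_b\rangle$ vanishes exactly) plus a manifestly nonnegative remainder; and you also confront the domain issues---the $2\pi$ jump in the turning angle and the tangency locus---that the paper leaves implicit. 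Your route is a little longer but more self-contained; the paper's is terser but rests the key inequality on intuition.
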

	
	\begin{proof}
Let $p_1 p_2 p_3$ be a curve of type $CS$ where:
		
		\begin{itemize}
			\item $p_1 p_2$ is a helical arc of radius 1.
			\item $p_2 p_3$ is a straight segment.
		\end{itemize}
		
		\begin{figure}
			\centering
			\includegraphics[width=0.45\textwidth]{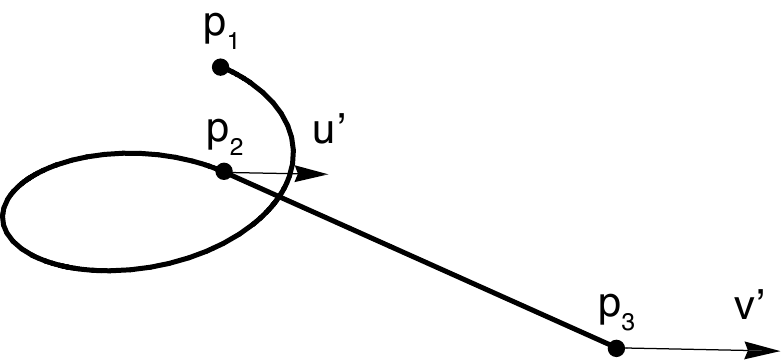}
			\caption{An edge of the admissible network}
			\label{FIG15}
		\end{figure}

In Figure \ref{FIG15}, $p_1$ is held fixed while the point $p_2$ moves along the vector $\mathbf{u}'$ and the point $p_3$ along the vector $\mathbf{v}'$. Let $L$ denote the length of $p_1 p_2 p_3$. Note that the helical arc is just a straight line segment wrapped around a cylinder. In other words, we can represent the helical arc by a vector $\vv{p_1 p_2}$ lying in the plane. Then according to Proposition \ref{PROP1}, the first variation of length of the helical arc $p_1 p_2$ is given by $\mathbf{u}' \cdot \hat{p_1 p_2}$, where the perturbation of $p_2$ in the direction of $\mathbf{u}'$ is tangent to the cylinder. On the other hand, the first variation of length of the straight line segment $p_2 p_3$ is given by $(\mathbf v' - \mathbf u') \cdot \hat{p_2 p_3}$. Summing these two terms, we obtain for the first variation of length of $p_1 p_2 p_3$
		
		\begin{equation*}
		L' = \mathbf{u}' \cdot \hat{p_1 p_2} + (\mathbf v' - \mathbf u') \cdot \hat{p_2 p_3} = \mathbf v' \cdot \hat{p_2 p_3}.
		\end{equation*}
		
		By the product rule, the second variation of length of $p_1 p_2 p_3$ is
		
		\begin{equation*}
		L'' = \mathbf{v}'' \cdot \hat{p_2 p_3} + \mathbf{v}' \cdot \hat{p_2 p_3}'.
		\end{equation*}
		
		Note that the first term is zero, because the perturbation of $p_3$ in the direction of $\mathbf{v}'$ is supposed to be linear. It is geometrically obvious that the angle between the vectors $\mathbf{v}'$ and $\hat{p_2 p_3}'$ is no greater than $\pi/2$. Hence the second variation of $L$ is nonnegative and $L$ is convex. Since the weighted sum of convex functions is also convex, it follows that the length of the network is convex with respect to the position of the Steiner point. \qed
	\end{proof}
	
	We note that in the 3-terminal case, a weighted Dubins network (weighted Steiner tree) is the projection of an unweighted Dubins network (an unweighted Steiner tree) in $\mathbb{R}^3$. Moreover, the 2D gradient is the projected 3D gradient. Hence, with regards to the algorithm, the projected admissible network converges if and only if the admissible network converges.

\section{Conclusion}\label{sec:7} 

We have given algorithms to find minimal Dubins networks for three directed points in the plane and in 3D space. For applications to mining, it is necessary to also incorporate a gradient constraint so the latter algorithm is only useful if the resulting network has gradient less than the maximum allowed. In \cite{Brazil_a}, there is a discussion of how to find gradient constrained Steiner trees in 3D space. So a challenge is to merge the methods in this paper and \cite{Brazil_a} to find a full solution for the 3D gradient constrained Dubins problem. 

For more directed points in the plane, our 3 directed point  method will generalise to find Dubins networks, so long as the points are sufficiently far apart relative to the turning circle constraints. If the points are relatively close together, solutions which look like Steiner trees with arcs of circles near the directed points will not be possible. 


\end{document}